\documentclass[12pt]{amsart}

\usepackage{amsmath,amssymb,amsfonts,amsthm,amscd,indentfirst}

\usepackage{graphicx}

\textheight 8.5in
\textwidth 6 in
\topmargin 0.0cm
\oddsidemargin 0.5cm \evensidemargin 0.5cm
\parskip 0.0cm

\usepackage{indentfirst}
\usepackage{hyperref}

\usepackage{color}


\usepackage{setspace}
\usepackage{amsmath, amssymb,amsthm, amsfonts, color}
\newtheorem{thm}{Theorem}

\theoremstyle{definition}

\theoremstyle{remark}

\newcommand{\R}{\mathbb R}

\newcommand{\be}{\begin{equation}}
\newcommand{\ee}{\end{equation}}
\newcommand{\bee}{\begin{equation*}}
\newcommand{\eee}{\end{equation*}}

\def\p{\partial}

\def\Pi{\displaystyle{\mathbb{II}}}

\begin{document}

\title{Nonexistence of NNSC fill-ins with large \\ 
mean curvature}

\author{Pengzi Miao}
\address[Pengzi Miao]{Department of Mathematics, University of Miami, Coral Gables, FL 33146, USA}
\email{pengzim@math.miami.edu}

\thanks{The author's research was partially supported by NSF grant DMS-1906423.}


\begin{abstract}
In this note we show that a closed Riemannian manifold does not admit a fill-in with nonnegative scalar curvature
if the mean curvature is point-wise large.  
Similar result also holds for fill-ins with a negative scalar curvature lower bound.
\end{abstract}


\maketitle

\markboth{Pengzi Miao}{Nonexistence of NNSC fill-ins}

Consider a $2$-sphere $S^2$ with the standard round metric $\gamma_o$ of area $4\pi$.
If $H$ is a function on $S^2$ with $ H > 2$, there does not exist any compact Riemannian $3$-manifold 
$(\Omega, g)$ with nonnegative scalar curvature, with boundary such that $\p \Omega$ is isometric to $(S^2, \gamma_o)$ and
 has mean curvature  $H$. This can be derived as 
 a consequence of the Riemannian positive mass theorem \cite{SchoenYau79, Witten81}, 
 formulated 
 on manifolds with corner along hypersurfaces \cite{Miao02, ShiTam02}. 
 
For an arbitrary closed orientable surface $\Sigma$, a pair $(\gamma, H)$ is called Bartnik data on $\Sigma$ if $\gamma$ and $H$ 
denote  a metric and a function on $\Sigma$, respectively. The question whether $(\Sigma, \gamma, H)$ bounds a compact $3$-manifold
with nonnegative scalar curvature, with boundary  isometric to $(\Sigma, \gamma)$ and having mean curvature 
 $H$ is closely tied to the quasi-local mass problem of $(\Sigma, \gamma, H)$ (see \cite{Bartnik89, Jauregui13} for instance).

In general, let $\Sigma^{n-1}$ be a closed  $(n-1)$-dimensional manifold. Given a metric $\gamma$ and 
a function $H$ on $\Sigma$, 
we say a compact orientable  Riemannian $3$-manifold $(\Omega, g)$ is a nonnegative scalar curvature (NNSC) fill-in of $(\Sigma, \gamma, H)$ 
if  $\p \Omega$ is isometric to $(\Sigma, \gamma)$ and the mean curvature of $\p \Omega$  equals $H$. 
In \cite{Gromov18}, Gromov showed, if $\Omega$ is a spin manifold and if $(\Omega^n, g) $ is 
a NNSC fill-in of $(\Sigma, \gamma, H)$, then 
\be \label{eq-Gromov-1}
\min_\Sigma H \le \frac{n-1}{ \mathrm{Rad} (\Sigma,\gamma) },
\ee
where $\text{Rad} (\Sigma, \gamma) $ is a constant only depending on $(\Sigma, \gamma) $, known as the (hyper)spherical radius of 
$(\Sigma, \gamma)$. As a result, spin NNSC fill-ins of $(\Sigma, \gamma, H)$ do not exist for large mean curvature function $H$. 

One can drop the requirement on $H$ when studying the geometry of  fill-ins. We say $(\Omega, g)$ is a fill-in of $(\Sigma, \gamma)$
if $\p \Omega$ is isometric to $(\Sigma, \gamma)$. Interaction among the scalar curvature, the total boundary 
mean curvature, and the volume of fill-ins were studied  in \cite{ShiTam02, JMT13, Mantoulidis-Miao16, MiaoTam08, SWWZ, SWW20}. 

In \cite{Gromov19}, Gromov raised the following  existence question of fill-ins with positive scalar curvature. 

\vspace{.2cm}

\noindent {\bf Question 1} (\cite{Gromov19}) 
{\sl 
If $ \Sigma = \p \Omega$ for a compact manifold $\Omega$ and $\gamma$ is a Riemannian metric on $\Sigma$, 
does $\gamma$ extend to a Riemannian metric $g$ on $\Omega$ with positive scalar curvature?
}

\vspace{.2cm} 

This question recently has been answered affirmatively by Shi, Wang and Wei  \cite{SWW20}.

\begin{thm} [Shi-Wang-Wei \cite{SWW20}] \label{thm-SWW}
Let $\Omega^n$ be a compact $n$-dimensional manifold with boundary $\Sigma $. 
Then any metric $\gamma$ on $\Sigma$ can be extended to a Riemannian metric $g$ on $\Omega$ with positive scalar curvature.
\end{thm}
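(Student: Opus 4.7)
The approach is to construct the PSC extension $g$ by gluing a warped-product collar onto a PSC filling of the complement. The scalar curvature of a warped-product metric $dr^2 + \phi(r)^2 \gamma$ on $\Sigma^{n-1} \times I$ is
\[
R = \phi^{-2} R_\gamma - 2(n-1)\frac{\phi''}{\phi} - (n-1)(n-2)\frac{(\phi')^2}{\phi^2},
\]
so choosing $\phi$ strictly concave with $\phi(0)=1$ forces the $-2(n-1)\phi''/\phi$ term to dominate and yields positive scalar curvature regardless of the sign of $R_\gamma$.

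First, on a collar $\Sigma \times [0,\epsilon]$ of $\partial\Omega$ I would place the metric $g_1 = dr^2 + \cos^2(\lambda r)\,\gamma$, choosing $\lambda$ large and then $\epsilon$ so small that $\lambda\epsilon \ll 1$. A direct computation shows that the contribution $2(n-1)\lambda^2$ from the middle term swamps both the bounded contribution of $\phi^{-2}R_\gamma$ and the term $(n-1)(n-2)\lambda^2 \tan^2(\lambda r)$ (which is of order $\lambda^4\epsilon^2$), so $R_{g_1} > 0$ everywhere on the collar, while $g_1|_{r=0} = \gamma$ by construction.

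The remaining task is to fill the complement $\Omega' := \Omega \setminus (\Sigma \times [0,\epsilon))$, a compact manifold with boundary diffeomorphic to $\Sigma$, with a PSC metric $g_2$ that matches $g_1$ smoothly across $\Sigma \times \{\epsilon\}$. The strategy is: (i) show that $\Omega'$ admits some PSC metric, and then (ii) modify that metric in a thin second collar near $\partial\Omega'$, using another warped product, so that both the induced metric and the mean curvature on $\Sigma \times \{\epsilon\}$ agree with those coming from $g_1$; a $C^\infty$ interpolation then produces the desired smooth global metric.

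The principal obstacle is step (i): showing that a general compact manifold with non-empty boundary admits \emph{some} PSC metric. The natural attempt is a Gromov--Lawson style surgery argument, starting from a warped-product PSC metric near $\partial\Omega'$ and building up via a handle decomposition of $\Omega'$ relative to a collar of its boundary. The difficulty is that such decompositions need not consist only of handles of index $\leq n-3$ (the range in which surgery preserves PSC), and handles of index $n-2$ and $n-1$ require either handle-trading arguments or a separate direct construction supported near those handles. Once (i) is granted, step (ii) is a controlled warped-product interpolation whose feasibility comes from the same scalar-curvature formula displayed above, applied in reverse to bridge between two prescribed boundary geometries.
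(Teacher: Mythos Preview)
Your proposal has a real gap in the transition step, and you have misidentified where the difficulty lies.

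Regarding your ``principal obstacle'' (i): the existence of \emph{some} PSC metric on a compact manifold with nonempty boundary is a classical fact, following from Gromov's h-principle and from Kazdan--Warner; these are precisely the references \cite{Gromov69, KW75} the paper cites. No surgery argument is needed, and the handle-index worries you raise do not arise. In the Shi--Wang--Wei proof (as the paper summarizes it) this is simply invoked at the outset: one fixes once and for all a PSC metric $\bar g$ on $\Omega$.

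The genuine difficulty, which your warped-product ansatz cannot overcome, is the transition. A metric of the form $dr^2 + \phi(r)^2\gamma$ keeps every slice \emph{homothetic} to $\gamma$; the conformal class never changes. Hence at $r=\epsilon$ your collar $g_1$ induces $\cos^2(\lambda\epsilon)\,\gamma$ on $\Sigma$, while your ``second warped product'' near $\partial\Omega'$, built from the PSC metric you found in step (i), will induce a scaling of $\bar g|_\Sigma$. There is no reason for $\gamma$ and $\bar g|_\Sigma$ to be homothetic, so these two collars cannot be made to agree on their common interface. Put differently: to make your scheme work you would need a PSC metric on $\Omega'$ with \emph{prescribed} boundary metric (a scaling of $\gamma$), which is exactly the theorem you are trying to prove, so nothing has been reduced.

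The Shi--Wang--Wei argument resolves this by replacing the rigid warped product with the parabolic (quasi-spherical) method of Bartnik and Shi--Tam: one writes the collar metric as $u(r,\cdot)^2\,dr^2 + g_r$, where $g_r$ is a \emph{prescribed} path of metrics on $\Sigma$ running from $\gamma$ to a large constant multiple of $\bar g|_\Sigma$, and one solves a parabolic PDE for $u$ that forces positive scalar curvature along the way. The freedom to let $g_r$ move non-conformally is exactly what your warped-product construction lacks.
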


To construct such an extension, the authors made an ingenious use of the parabolic method employed in \cite{Bartnik93, ShiTam02}.
More precisely, they started with a metric $\bar g$ on $\Omega$ with positive scalar curvature
(whose existence is guaranteed by \cite{Gromov69, KW75} for instance), 
and construct a suitable transition metric on   $\Sigma \times [0, 1]$, which connects $\gamma$ on $\Sigma \times \{ 0 \}$ 
to a (large) constant  scaling of the induced metric from $\bar g$  on $ \Sigma = \Sigma \times \{ 1 \}$,  via the parabolic method.

Applying their proof of Theorem \ref{thm-SWW} and the positive mass theorem, 
Shi, Wang and Wei \cite{SWW20} further proved the following nonexistence theorem on NNSC fill-ins.
Hereinafter, the dimension $n$ denotes a dimension for which the Riemannian positive mass theorem holds. (See
the recent work of Schoen-Yau \cite{SchoenYau17} and references therein.)

\begin{thm}[Shi-Wang-Wei \cite{SWW20}] \label{thm-SWW-class-c}
Suppose a closed manifold $\Sigma^{n-1}$ can be topologically embedded in $\R^n$. Given any Riemannian metric $\gamma$ 
on $\Sigma$, there exists a constant $h_0$, depending on $\Sigma$, $\gamma$, and the embedding of $\Sigma$ in $ \R^n$, 
such that, if $ \min_\Sigma H \ge h_0 $,  there do not exist  NNSC fill-ins of $(\Sigma, \gamma, H )$.
\end{thm}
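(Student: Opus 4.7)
The strategy is to reduce the nonexistence of NNSC fill-ins to the rigidity part of the positive mass theorem (PMT), following the corner-smoothing paradigm of Miao and Shi-Tam. The plan has two parts: first, to build an asymptotically flat NNSC exterior $(M_+, g_+)$ of $(\Sigma, \gamma)$ having zero ADM mass and quantitatively bounded boundary mean curvature $H_+$; second, to glue any hypothetical NNSC fill-in to this exterior and derive a contradiction via PMT with corners.

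\emph{Construction of $(M_+, g_+)$.} The topological embedding of $\Sigma$ into $\R^n$ identifies $\Sigma$ with $\p \Omega_0$ for a compact domain $\Omega_0 \subset \R^n$, inducing on $\Sigma$ the Euclidean metric $\gamma_E$ and mean curvature $H_E$. For a large scaling parameter $\lambda > 0$, the hypersurface $\lambda \Sigma \subset \R^n$ bounds $\lambda \Omega_0$ with induced metric $\lambda^2 \gamma_E$ and mean curvature $\lambda^{-1} H_E$ (small). Adapting the parabolic technique employed in the proof of Theorem \ref{thm-SWW}, I would construct a NNSC metric on the collar $\Sigma \times [0,1]$ interpolating between $(\Sigma, \gamma)$ at $t=0$ and $(\Sigma, \lambda^2 \gamma_E)$ at $t=1$, with the mean curvature at $t=1$ prescribed so as to match (or satisfy the Miao corner inequality against) the mean curvature of $\lambda \Sigma$ seen from $\R^n \setminus \lambda \Omega_0$. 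Gluing this collar to the Euclidean exterior along $\lambda \Sigma$ produces an asymptotically flat manifold $(M_+, g_+)$ with NNSC, with boundary isometric to $(\Sigma, \gamma)$, and with zero ADM mass (since at infinity the metric is standard Euclidean). Let $H_+$ denote the mean curvature of $\p M_+$ with the sign convention compatible with a fill-in from the opposite side, and set $h_0 := \max_\Sigma H_+ + 1$.

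\emph{Deriving the contradiction.} Assume a NNSC fill-in $(\Omega, g)$ of $(\Sigma, \gamma, H)$ exists with $\min_\Sigma H \ge h_0$. Glue $(\Omega, g)$ to $(M_+, g_+)$ along $\Sigma$ to form a complete asymptotically flat Lipschitz manifold $(M, g_{\text{tot}})$ that is smooth off $\Sigma$, has NNSC on each smooth piece, and satisfies $H > H_+$ strictly pointwise at the corner. Apply Miao's smoothing of corner metrics to obtain a sequence of smooth metrics on $M$ whose scalar curvatures are nonnegative (up to controlled corrections) and whose ADM masses converge to $m_{ADM}(g_+) = 0$. The PMT then forces $m_{ADM}(g_{\text{tot}}) = 0$, and its rigidity part implies $(M, g_{\text{tot}})$ is isometric to flat $\R^n$, which in turn requires $H = H_+$ pointwise on $\Sigma$. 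This contradicts $H \ge h_0 > \max_\Sigma H_+ \ge H_+$.

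\emph{Main obstacle.} The principal difficulty is quantitative control in the construction of $(M_+, g_+)$. The SWW parabolic method guarantees the existence of a NNSC interpolating collar, but pinning down $H_+$ in terms of intrinsic data of $(\Sigma, \gamma)$ and the embedding requires a careful choice of initial data, scaling $\lambda$, and flow duration. A related subtlety is the matching at $\lambda \Sigma$: either the mean curvatures of the collar and the Euclidean exterior must agree there (smooth junction) or a secondary corner is produced which must itself satisfy Miao's inequality; both options can be arranged by building the matching condition into the parabolic scheme, much as is done in the proof of Theorem \ref{thm-SWW}. Once $(M_+, g_+)$ is in hand, the rest of the argument is a standard application of the corner PMT paradigm in the dimensions for which PMT is known.
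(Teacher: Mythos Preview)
The paper does not give its own proof of this theorem; it is quoted as a result of Shi--Wang--Wei \cite{SWW20}, accompanied only by the one-line description that it follows by ``applying their proof of Theorem~\ref{thm-SWW} and the positive mass theorem.'' Your proposal is a faithful and essentially correct reconstruction of that outline: use the embedding hypothesis to manufacture an asymptotically flat NNSC exterior with exactly Euclidean end (hence zero ADM mass) by attaching an SWW-type parabolic collar to the Euclidean complement of a large dilate of the embedded $\Sigma$, and then invoke the corner positive mass theorem \cite{Miao02, ShiTam02} and its rigidity to contradict any fill-in whose boundary mean curvature strictly dominates that of the exterior.

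Two minor points. First, you pass directly from a \emph{topological} embedding to an induced metric $\gamma_E$ and mean curvature $H_E$; you should first replace the embedding by a smooth one (harmless, since $h_0$ is allowed to depend on the chosen embedding). Second, you have already correctly isolated the only substantive step---arranging the collar so that the junction at $\lambda\Sigma$ either matches mean curvatures or at least satisfies the corner inequality with the favorable sign---and this is exactly what the parabolic construction of \cite{SWW20} provides. For contrast, note that the paper's own Theorem~\ref{thm-main} drops the embedding hypothesis and therefore cannot use your PMT route; it instead takes a connected sum with $T^n$ and appeals to the Schoen--Yau obstruction, an argument of a quite different flavor.
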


It is the purpose of this note to show that no NNSC fill-ins exist for any  $\Sigma$, if $H$ is large.
The proof makes use of Shi-Wang-Wei's Theorem \ref{thm-SWW} above and Schoen-Yau's result on closed manifolds 
with positive scalar curvature \cite{SchoenYau79-MM}.

\begin{thm} \label{thm-main}
Let $ \Sigma^{n-1} $ be the boundary of some compact $n$-dimensional manifold $\Omega$.
Given any Riemannian metric $\gamma$ on $\Sigma$,
there exists a constant $H_0 $, depending on $\gamma$ and $\Omega$, such that,
if $ \min_\Sigma H \ge H_0  $,  there do not exist  NNSC fill-ins of $(\Sigma, \gamma, H )$.
\end{thm}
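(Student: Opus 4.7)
My plan is to argue by contradiction: from a hypothetical NNSC fill-in with very large mean curvature I manufacture a closed Riemannian manifold carrying a metric of positive scalar curvature, and then use the quantitative control on $H$ together with Schoen--Yau's theory to rule this out.

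The construction starts by applying Theorem \ref{thm-SWW} to the given compact manifold $\Omega$: this supplies a smooth metric $\bar g$ on $\Omega$ with $R_{\bar g}>0$ and $\bar g|_\Sigma=\gamma$. Let $\bar H$ denote the mean curvature of $\Sigma$ in $(\Omega,\bar g)$ computed with respect to the outward unit normal; since $\Sigma$ is compact, $\bar H$ is bounded, so I take $H_0:=1+\max(0,-\min_\Sigma\bar H)$. Now assume $(\Omega',g')$ is an NNSC fill-in of $(\Sigma,\gamma,H)$ with $\min_\Sigma H\ge H_0$; then $H+\bar H\ge 1>0$ pointwise on $\Sigma$.

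Form the closed topological manifold $N:=\Omega\cup_\Sigma\Omega'$, equipped with the Lipschitz Riemannian metric that coincides with $\bar g$ on $\Omega$ and with $g'$ on $\Omega'$; these agree on $\Sigma$ because both induce $\gamma$. Because the combined mean curvature $\bar H+H$ at the corner is strictly positive, the corner-smoothing construction of \cite{Miao02} produces smooth metrics on $N$ with nonnegative scalar curvature, strictly positive on a nonempty open subset inherited from $(\Omega,\bar g)$. The Schoen--Yau deformation result on closed manifolds with nonnegative scalar curvature \cite{SchoenYau79-MM} then upgrades this to a smooth metric on $N$ with $R>0$ everywhere.

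The main obstacle is extracting a genuine contradiction from the mere existence of a PSC metric on $N$, since the topology of $N$ varies with the unknown $\Omega'$ and therefore cannot be excluded from carrying PSC on purely topological grounds. I expect the resolution to lie in the quantitative side of the construction: by taking $H_0$ sufficiently large, the Miao-smoothing forces the scalar curvature of the smoothed metric to be strongly positive in a neighborhood of $\Sigma$, and then a Schoen--Yau dimension-reduction or stable-minimal-hypersurface descent (possibly applied after passing to a cover of $N$ in which $[\Sigma]$ becomes homologically non-trivial) should produce a PSC obstruction on a lower-dimensional manifold whose geometry is pinned down by the fixed data $(\Sigma,\gamma,\Omega)$. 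Calibrating $H_0$ against this lower-dimensional obstruction yields the contradiction; this quantitative matching is the delicate core of the argument.
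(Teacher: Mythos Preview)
Your proposal has a genuine gap, and you have in fact located it yourself: after gluing you obtain a closed manifold $N=\Omega\cup_\Sigma\Omega'$ carrying positive scalar curvature, but you have no topological obstruction to this. Indeed, for many choices of $\Omega$ and $\Omega'$ (say both diffeomorphic to a ball) $N$ is a sphere and certainly admits PSC, so no contradiction follows. Your proposed rescue via a ``quantitative'' Schoen--Yau descent calibrated by $H_0$ cannot work as stated: $H_0$ is required to depend only on $(\gamma,\Omega)$, while the object $N$ you want to analyze depends on the unknown fill-in $\Omega'$; and the hoped-for cover in which $[\Sigma]$ is nontrivial does not exist in general, since $\Sigma$ bounds $\Omega$ inside $N$ and hence is nullhomologous in every cover.

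The missing idea is purely topological and costs nothing analytically: before invoking Theorem~\ref{thm-SWW}, replace $\Omega$ by $\widetilde\Omega:=\Omega\#T^n$ (connected sum performed at an interior point), which still has boundary $\Sigma$. Apply Theorem~\ref{thm-SWW} to $\widetilde\Omega$ to get the PSC extension of $\gamma$, and define $H_0$ from the boundary mean curvature of \emph{that} metric. Your gluing and smoothing then go through verbatim, but the resulting closed manifold is $\widetilde M=(\Omega'\cup_\Sigma\Omega)\#T^n=K\#T^n$ for some closed orientable $K$. Schoen--Yau's theorem \cite{SchoenYau79-MM} (and \cite{SchoenYau17} in higher dimensions) asserts that $K\#T^n$ never admits a PSC metric, regardless of $K$. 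This is the contradiction, and it requires no quantitative refinement at all.
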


\begin{proof}
Let $p$ be  an interior point  in $\Omega$. Near $p$, form a connected sum of $\Omega$ with $T^n$, where
$T^n$ is the $n$-dimensional torus. Denote the resulting manifold by $\widetilde \Omega = \Omega \# T^n$, 
then $\p \widetilde \Omega = \p \Omega$.

Given the metric $\gamma$ on $\Sigma$, apply Theorem \ref{thm-SWW} to $\Sigma = \p \widetilde \Omega  $,
one obtains a metric $ g $ with positive scalar curvature on $\widetilde \Omega  $ such that  $g$ induces the metric 
$\gamma$ on $\Sigma $. 
Let $H_{_{\widetilde \Omega}} $ be the mean curvature of $\Sigma$ in $(\widetilde \Omega, g)$ with respect to the inward unit normal. 

Now suppose  $(M, g_{_M})$ is a compact manifold with nonnegative scalar curvature so that $\p M$ is isometric to
$(\Sigma, \gamma)$.  Let $H_{_M}$ be the mean curvature of $\Sigma = \p M $ in $(M, g_{_M})$ with respect to the outward unit normal.
Suppose
\be \label{eq-mean-curvature-H}
\min_\Sigma H_{_M} \ge \max_\Sigma H_{_{\widetilde \Omega}}. 
\ee

Consider the manifold $( \widetilde M , \tilde g)$ obtained by gluing $(M, g_{_M})$ and $(\widetilde \Omega, g)$ along their common boundary $(\Sigma, \gamma)$.
The metric $\tilde g$ has nonnegative scalar curvature in $M$, has positive scalar curvature in $\tilde \Omega$, and satisfies 
\eqref{eq-mean-curvature-H} across $\Sigma $ in $ \widetilde M$. By the interpretation of \eqref{eq-mean-curvature-H} 
as the metric having nonnegative distributional scalar curvature across $\Sigma$ \cite{Miao02}, 
one expects $\tilde g$ can be mollified to produce a smooth positive scalar curvature metric on $\widetilde M$.

This expectation can be verified via results in \cite{ShiTam16} (also see \cite{LiMantoulidis18}) for instance. 
By Corollary 4.2 in \cite{ShiTam16}, there exists a sequence of smooth metrics $\{ g_i \}$ on $ \widetilde M$ such that 
$ g_i$ has nonnegative scalar curvature
and $g_i$ converges  to $\tilde g$ in $C^\infty$ norm on compact sets away from $\Sigma$.
Since $\tilde g = g $ has positive scalar curvature on $\tilde \Omega$, this implies  $g_i$ has positive scalar curvature 
somewhere in $\widetilde \Omega$. Consequently, $\widetilde  M $ supports a metric with positive scalar curvature. 

However, by construction,  $\widetilde M$ has topology 
\be
\widetilde M = K \# T^n , 
\ee
where $K$ is an $n$-dimensional closed orientable manifold obtained by gluing $M $ and $\Omega$ along their common boundary $\Sigma$.
By Schoen-Yau's result on closed manifolds \cite{SchoenYau79-MM} (also see \cite{SchoenYau17}), $\widetilde M$ does not admit a metric with positive scalar curvature. 

This gives a contradiction to \eqref{eq-mean-curvature-H}.
The claim follows by taking  $ H_0 = \max_\Sigma H_{_{\widetilde \Omega} }$.
\end{proof}

Combined with a trick of Gromov \cite{Gromov18},  Theorem \ref{thm-main} implies 
a similar result for fill-ins with a negative scalar curvature lower bound.

\begin{thm} \label{thm-main-n}
Let $ \Sigma^{n-1} $ be the boundary of some compact $n$-dimensional manifold $\Omega$.
Let $ \sigma > 0 $ be a constant. Given a Riemannian metric $\gamma$ on $\Sigma$,
there exists a constant $ H_\sigma $, depending on $\gamma$, $\Omega$ and $\sigma$, such that,
if $ \min_\Sigma H \ge H_\sigma  $,  $(\Sigma, \gamma, H )$ does not have fill-ins with scalar curvature 
bounded below by $ - \sigma$.
\end{thm}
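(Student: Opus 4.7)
The plan is to reduce Theorem~\ref{thm-main-n} to Theorem~\ref{thm-main} via Gromov's \emph{product trick}: multiplying a hypothetical fill-in by a small round sphere trades a scalar curvature lower bound of $-\sigma$ for nonnegative scalar curvature while leaving the boundary mean curvature unchanged.

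Concretely, suppose $(M^n, g_M)$ is a fill-in of $(\Sigma, \gamma, H)$ with $R_{g_M} \ge -\sigma$. I would form the Riemannian product
\[
(\widehat M, \hat g) \ := \ \lf( M \times \mathbb{S}^2, \ g_M + \epsilon^2 g_{\mathbb{S}^2} \ri), \qquad \epsilon \ := \ \sqrt{2/\sigma},
\]
where $g_{\mathbb{S}^2}$ denotes the unit round metric on $\mathbb{S}^2$. Since scalar curvatures of Riemannian products add, $R_{\hat g} = R_{g_M} + 2/\epsilon^2 \ge -\sigma + \sigma = 0$, so $\hat g$ is an NNSC metric on $\widehat M$.

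The induced metric on $\widehat \Sigma := \p \widehat M = \Sigma \times \mathbb{S}^2$ is $\hat \gamma := \gamma + \epsilon^2 g_{\mathbb{S}^2}$. Because the outward unit normal of $\widehat \Sigma$ in $\widehat M$ lies tangent to the $M$-factor and is parallel along the $\mathbb{S}^2$-factor, the second fundamental form of $\widehat \Sigma$ decomposes as $A_\Sigma \oplus 0$ relative to the splitting $T \widehat \Sigma = T\Sigma \oplus T \mathbb{S}^2$. Under the convention that mean curvature equals the trace of the second fundamental form (as used in the paper), the mean curvature of $\widehat \Sigma$ in $\widehat M$ therefore agrees with $H$ pointwise. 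Moreover, $\widehat \Sigma$ bounds the compact $(n+2)$-manifold $\widehat \Omega := \Omega \times \mathbb{S}^2$, since $\p(\Omega \times \mathbb{S}^2) = \Sigma \times \mathbb{S}^2 = \widehat \Sigma$.

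Applying Theorem~\ref{thm-main} to the Bartnik data $(\widehat \Sigma, \hat \gamma, H)$ on $\widehat \Omega$ in dimension $n+2$ now yields a constant $H_0 = H_0(\hat \gamma, \widehat \Omega)$ which, through the fixed choice $\epsilon = \sqrt{2/\sigma}$, depends only on $\gamma$, $\Omega$, and $\sigma$; setting $H_\sigma := H_0$, the existence of $(\widehat M, \hat g)$ would contradict Theorem~\ref{thm-main} whenever $\min_\Sigma H \ge H_\sigma$. The main obstacle is purely bookkeeping: one must verify that Theorem~\ref{thm-main} applies in dimension $n+2$ (guaranteed by the Schoen-Yau resolution of the positive mass theorem in all dimensions \cite{SchoenYau17} cited in the excerpt) and that the mean curvature is preserved by the product (immediate from the second-fundamental-form splitting above). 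All substantive analytic content has already been packaged in Theorem~\ref{thm-main}.
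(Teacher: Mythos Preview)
Your proof is correct and is essentially the same as the paper's: both apply Gromov's product trick, multiplying by a round sphere of the appropriate radius to raise the scalar curvature by exactly $\sigma$, and then invoke Theorem~\ref{thm-main} on the product. The only cosmetic difference is that the paper allows a factor $S^m(r)$ with any $m\ge 2$ and $m(m-1)=r^2\sigma$, whereas you fix $m=2$ (your $\epsilon$ coincides with the paper's $r$ in that case).
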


\begin{proof}
Let $({S}^{m} (r) , g_{_S})$ denote a standard $m$-dimensional round sphere with radius $ r$. 
Here $ m \ge 2$ and $ r$ is chosen so that $ m ( m -1) = r^2 \sigma $.

Suppose $(M, g_{_M})$ is a fill-in of $(\Sigma, \gamma)$ with $ R (g_{_M} ) \ge - \sigma $.
Following \cite{Gromov18}, consider the Riemannian product $ (N^{n+m}, g_{_N} ) =  (M, g_{_M}) \times (S^{m} (r) , g_{_S}) $.
Then $ R ( g_{_N} ) \ge 0 $, and $ \p N = \Sigma \times S^{m} (r) $ has 
mean curvature $H_{_M}$ in $(N, g_{_N})$. 
The claim follows by taking $H_\sigma = H_0$, where $H_0$ is the constant given in Theorem \ref{thm-main} when 
it is applied to $ \Sigma \times S^m (r) = \p ( \Omega^n \times S^m (r) ) $
with the metric  $\gamma + g_{_S}$.
\end{proof}

\vspace{.1cm}

We conclude this note with some questions open to the author's knowledge. 
Given a pair $(\Sigma, \gamma)$ with $\Sigma $ being the boundary of some compact manifold, 
let $\mathcal{F}_{{(\Sigma, \gamma)} }$ 
denote the set of NNSC fill-ins of $(\Sigma, \gamma)$. 
Shi-Wang-Wei's extension theorem shows
$
 \mathcal{F}_{{ (\Sigma, \gamma) } } \neq \emptyset . 
$
However, the fill-in produced in \cite{SWW20} has a feature that 
its boundary $\Sigma$ has negative mean curvature, i.e. 
the mean curvature vector of $\Sigma$ points outward. 
This leads to a question similar with but different to Question 1:

\vspace{.2cm}

\noindent {\bf Question 2}.
{\sl 
If $ \Sigma $ is the boundary of some compact manifold  and $\gamma$ is a Riemannian metric on $\Sigma$, 
is $\mathcal{F}^{ \, +} _{{(\Sigma, \gamma)} } \ne \emptyset$? Here 
$  \mathcal{F}^+_{{ (\Sigma, \gamma) } } $ denotes the set of NNSC fill-ins of $(\Sigma, \gamma)$ 
so that $\Sigma$ has positive mean curvature. 
}

\vspace{.2cm}

From the point of view of quasi-local mass \cite{ShiTam02, WY09}, the topology of fill-ins are allowed to vary. 
If the topology of fill-ins is fixed to be some $\Omega$, a recent result of  
Carlotto and Li \cite{Carlotto-Li19} completely determines the topology of these $\Omega$
in $3$-dimension, assuming $\Omega$ admits a positive scalar curvature metric with mean convex boundary.

In terms of the set $\mathcal{F}_{(\Sigma, \gamma)}$, Theorem \ref{thm-main} shows 
\be \label{eq-sup-min-H}
\sup_{(M, g_{_M} ) \in \mathcal{F}_{(\Sigma, \gamma)} } \min_\Sigma H_{_M} < \infty . 
\ee
Considering Gromov's result \eqref{eq-Gromov-1}, it would be interesting to know  
if the left side of \eqref{eq-sup-min-H}
can be estimated  explicitly by a metric quantity of  $(\Sigma, \gamma)$?

\vspace{.3cm}

\noindent {\bf Acknowledgements.}
I thank Yuguang Shi for helpful discussions in relation to Theorem \ref{thm-main}.

\end{document}